\theoremstyle{plain}
\newtheorem{theorem}{Theorem}[section]
\newtheorem{lemma}[theorem]{Lemma}
\newtheorem{corollary}[theorem]{Corollary}
\newtheorem{proposition}[theorem]{Proposition}
\newcommand{\bnum}{\begin{enumerate}}
\newcommand{\enum}{\end{enumerate}}
\numberwithin{equation}{section}
\DeclareMathOperator{\Aut}{Aut}
\DeclareMathOperator{\Inn}{Inn}
\DeclareMathOperator{\orb}{orb}
\begin{document}

\title{\textbf{Autocommuting probability of a finite group}}
\author{Parama Dutta and Rajat Kanti Nath\footnote{Corresponding author}}
\date{}
\maketitle
\begin{center}\small{\it
Department of Mathematical Sciences, Tezpur University,\\ Napaam-784028, Sonitpur, Assam, India.\\



Emails:\, parama@gonitsora.com and rajatkantinath@yahoo.com}
\end{center}

\medskip

\begin{abstract}
Let $G$ be a finite group and $\Aut(G)$ the automorphism group of $G$. The autocommuting probability of $G$, denoted by $\Pr(G, \Aut(G))$, is the probability that  a randomly chosen automorphism of  $G$ fixes a  randomly chosen element of $G$. In this paper, we study $\Pr(G, \Aut(G))$ through a generalization. We obtain a computing formula, several bounds and characterizations of $G$ through $\Pr(G, \Aut(G))$. We conclude the paper by showing that the generalized autocommuting probability of $G$ remains unchanged under autoisoclinism.
\end{abstract}

\medskip

\noindent {\small{\textit{Key words:} Automorphism group, Autocommuting probability, Autoisoclinism.}

\noindent {\small{\textit{2010 Mathematics Subject Classification:} 20D60, 20P05, 20F28.}}

\medskip

\section{Introduction}

Let  $G$ be a finite group and $\Aut(G)$ be its automorphism group. For any  $x \in G$ and $\alpha \in \Aut(G)$ the element  $x^{-1}\alpha (x)$, denoted by $[x,\alpha]$,  is called an autocommutator in $G$. We write $S(G,\Aut(G))$ to denote the set  $\{[x,\alpha] : x \in G \text{ and } \alpha \in \Aut(G)\}$ and $K(G) := \langle S(G,\Aut(G)) \rangle$. We also write
$L(G) := \{x : [x, \alpha] = 1 \text{ for all }\alpha \in \Aut(G)\}$. Note that $K(G)$ and $L(G)$ are characteristic subgroups of $G$ known as the autocommutator subgroup and absolute center  of $G$. It is easy to see that $K(G)$ contains the commutator subgroup $G'$  and $L(G)$ is contained in the center $Z(G)$ of $G$. Further, $L(G) = \underset{\alpha \in \Aut(G)}{\cap}C_G(\alpha)$, where   $C_G(\alpha) = \{x\in G:[x,\alpha] = 1\}$ is a subgroup of $G$ known as acentralizer of $\alpha \in \Aut(G)$. The  subgroups $K(G)$ and $L(G)$ were defined and studied by
Hegarty in \cite{hegarty}. Let $C_{\Aut(G)}(x) := \{\alpha\in \Aut(G) : \alpha(x) = x\}$ for  $x \in G$ then $C_{\Aut(G)}(x)$ is a subgroup of $\Aut(G)$ and $C_{\Aut(G)}(G) = \{\alpha \in \Aut(G) : \alpha (x)  = x \text{ for all } x \in G\}$. It follows that $C_{\Aut(G)}(G) = \underset{x\in G}{\cap}C_{\Aut(G)}(x)$.

Let $G$ be a finite group acting on a set $\Omega$.  In the year 1975, Sherman \cite{sherman} introduced  the probability that  a randomly chosen element of $\Omega$  fixes a  randomly chosen element of $G$. We denote this probability by $\Pr(G, \Omega)$. If $\Omega = \Aut(G)$ then $\Pr(G, \Aut(G))$ is nothing but the  probability that the autocommutator of a randomly chosen pair of elements, one from $G$ and the other from $\Aut(G)$, is equal to $1$ (the identity element of $G$). Thus
\begin{equation}\label{AutComDeg}
{\Pr}(G,\Aut(G)) = \frac {\left|\{(x,\alpha)\in G\times \Aut(G):[x,\alpha]=1\}\right|}{|G||\Aut(G)|}.
\end{equation}
${\Pr}(G,\Aut(G))$ is called autocommuting probability of $G$. Sherman \cite{sherman} considered the case when $G$ is abelian and $\Omega = \Aut(G)$.
Note that if we take $\Omega = \Inn(G)$, the inner automorphim group of $G$, then ${\Pr}(G,\Inn(G))$ gives the probability that a randomly chosen pair of elements of $G$ commute.  ${\Pr}(G,\Inn(G))$ is known as commuting probability of $G$. It is also denoted by $\Pr(G)$. The study of $\Pr(G)$ was initiated by Erd$\ddot{\rm o}s$ and Tur$\acute {\rm a}$n \cite{pEpT68}. After Erd$\ddot{\rm o}s$ and Tur$\acute {\rm a}$n  many authors have worked on  $\Pr(G)$ and its generalizations (conf. \cite{Dnp13} and the references therein).
Somehow the study of ${\Pr}(G,\Aut(G))$ was neglected for long time. At this moment we have only a handful of papers on ${\Pr}(G,\Aut(G))$ (see  \cite{aK16}, \cite{moga}, \cite{Rismanchian15}). In this paper, we study ${\Pr}(G,\Aut(G))$ through a generalization.

In the year 2008, Pournaki and Sobhani \cite{PS08} have generalized $\Pr(G)$ and considered the probability that the commutator of a randomly chosen pair of elements of $G$ equals a given element $g \in G$. We write ${\Pr}_g(G)$ to denote this probability.  Motivated by their work, in this paper, we consider the probability  that the autocommutator of a randomly chosen pair of elements, one from $G$ and the other from $\Aut(G)$, is equal to a given element $g \in G$. We write ${\Pr}_g(G,\Aut(G))$ to denote this probability. Thus,
\begin{equation}\label{GenAutComDeg}
{\Pr}_g(G,\Aut(G)) = \frac {\left|\{(x, \alpha)\in G \times \Aut(G) : [x, \alpha] = g\}\right|}{|G||\Aut(G)|}.
\end{equation}
Notice that  ${\Pr}_1(G,\Aut(G)) = {\Pr}(G,\Aut(G))$. Hence ${\Pr}_g(G,\Aut(G))$ is a generalization of ${\Pr}(G,\Aut(G))$. Clearly, ${\Pr}_g(G, \Aut(G)) = 1$ if and only if $K(G) = \{1\}$ and $g = 1$  if and only if $G = L(G)$ and $g = 1$. Also, ${\Pr}_g(G, \Aut(G)) = 0$ if and only if  $g \notin S(G,\Aut(G))$. Therefore, we consider $G \ne L(G)$ and $g  \in S(G,\Aut(G))$ throughout the paper.

In Section 2,  we obtain a computing formula for ${\Pr}_g(G,\Aut(G))$ and deduce some of its consequences.  In Section 3, we obtain several bounds for ${\Pr}_g(G,\Aut(G))$. In Section 4, we obtain some characterizations of $G$ through ${\Pr}(G,\Aut(G))$. Finally, in the last section, we show that ${\Pr}_g(G,\Aut(G))$ is an invariant under autoisoclinism of groups.

\section{A computing formula}


For any $x, g \in G$ let  $T_{x, g}$ denotes the set $\{\alpha \in \Aut(G):[x,\alpha] = g\}$. Note that $T_{x, 1} = C_{\Aut(G)}(x)$. The following two lemmas play  a crucial role in obtaining the  computing formula for ${\Pr}_g(G,\Aut(G))$.
\begin{lemma}\label{lemma1}
Let $G$ be a finite group. If $T_{x, g} \ne \phi$ then $T_{x, g} = \sigma C_{\Aut(G)}(x)$ for some $\sigma\in T_{x, g}$. Hence, $|T_{x, g}| = |C_{\Aut(G)}(x)|$.
\end{lemma}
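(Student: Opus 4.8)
The plan is to recognize this as a standard coset-translation argument, exploiting the fact that the condition $[x,\alpha]=g$ pins down the image of $x$ completely. Since $[x,\alpha]=x^{-1}\alpha(x)$, the equation $[x,\alpha]=g$ is equivalent to $\alpha(x)=xg$. Thus $T_{x,g}$ is precisely the set of automorphisms carrying $x$ to the fixed element $xg$, while $C_{\Aut(G)}(x)=T_{x,1}$ is the set carrying $x$ to $x$ itself. This reformulation is the entire engine of the proof.

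Assuming $T_{x,g}\neq\phi$, I would fix $\sigma\in T_{x,g}$, so that $\sigma(x)=xg$, and then prove the two inclusions separately. For $T_{x,g}\subseteq\sigma C_{\Aut(G)}(x)$: given any $\alpha\in T_{x,g}$ we also have $\alpha(x)=xg=\sigma(x)$, whence $(\sigma^{-1}\alpha)(x)=\sigma^{-1}(\sigma(x))=x$, so $\sigma^{-1}\alpha\in C_{\Aut(G)}(x)$ and therefore $\alpha\in\sigma C_{\Aut(G)}(x)$. For the reverse inclusion: taking $\beta\in C_{\Aut(G)}(x)$, so $\beta(x)=x$, we get $(\sigma\beta)(x)=\sigma(\beta(x))=\sigma(x)=xg$, giving $[x,\sigma\beta]=x^{-1}(xg)=g$ and hence $\sigma\beta\in T_{x,g}$. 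Combining the two inclusions yields $T_{x,g}=\sigma C_{\Aut(G)}(x)$.

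The cardinality claim is then immediate: $\sigma C_{\Aut(G)}(x)$ is a left coset of the subgroup $C_{\Aut(G)}(x)$ in $\Aut(G)$, and left multiplication by $\sigma$ is a bijection of $\Aut(G)$, so $|T_{x,g}|=|\sigma C_{\Aut(G)}(x)|=|C_{\Aut(G)}(x)|$.

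There is no genuine obstacle here; the only point demanding care is the order of composition, so that it is $\sigma^{-1}\alpha$ (and not $\alpha\sigma^{-1}$) which fixes $x$ — i.e. the coset that appears is the \emph{left} coset $\sigma C_{\Aut(G)}(x)$, consistent with the statement. The reasoning uses nothing about $\Aut(G)$ beyond its being a group acting on $G$, so the same argument would apply verbatim with any subgroup of $\Aut(G)$ in place of the full automorphism group.
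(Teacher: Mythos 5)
Your proof is correct and follows essentially the same route as the paper: both arguments fix $\sigma\in T_{x,g}$, verify $\sigma C_{\Aut(G)}(x)\subseteq T_{x,g}$ by computing $[x,\sigma\alpha]=g$ for $\alpha$ fixing $x$, and verify the reverse inclusion by showing $\sigma^{-1}\gamma$ fixes $x$, with the cardinality claim following from the coset structure. The only difference is cosmetic — your explicit reformulation $[x,\alpha]=g\iff\alpha(x)=xg$ up front, which the paper uses implicitly.
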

\begin{proof}
Let $\sigma\in T_{x, g}$ and $\beta \in \sigma C_{\Aut(G)}(x)$. Then $\beta = \sigma \alpha$ for some $\alpha\in C_{\Aut(G)}(x)$. We have
\[
[x,\beta] = [x,\sigma \alpha] = x^{-1}\sigma(\alpha (x)) = [x,\sigma] = g.
\]
Therefore, $\beta \in T_{x, g}$ and so $\sigma  C_{\Aut(G)}(x)  \subseteq T_{x, g}$. Again, let $\gamma\in T_{x, g}$ then $\gamma(x) = xg$. We have  $\sigma ^{-1} \gamma(x) = \sigma^{-1}(xg) = x$ and so $\sigma^{-1}  \gamma \in C_{\Aut(G)}(x)$. Therefore, $\gamma\in \sigma  C_{\Aut(G)}(x)$ which gives $T_{x, g} \subseteq \sigma C_{\Aut(G)}(x)$. Hence, the result follows.
\end{proof}

 We know that $\Aut(G)$ acts on $G$ by the action $(\alpha,x)\mapsto \alpha(x)$  where $\alpha \in \Aut(G)$ and $x\in G$.  Let $\orb(x) := \{\alpha(x) : \alpha \in \Aut(G)\}$ be the orbit of $x \in G$. Then by orbit-stabilizer theorem, we have
\begin{equation}\label{orbit-stabilizer-Thm}
 |\orb(x)| = \frac {|\Aut(G)|}{|C_{\Aut(G)}(x)|}.
\end{equation}
\begin{lemma}\label{lemma01}
Let $G$ be a finite group. Then $T_{x, g} \ne \phi$ if and only if  $xg \in  \orb(x)$.
\end{lemma}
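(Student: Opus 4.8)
The plan is to reduce both sides of the biconditional to one and the same elementary condition on the action of $\Aut(G)$, namely the existence of some $\alpha \in \Aut(G)$ with $\alpha(x) = xg$. This should make the proof a short definition-chase rather than anything requiring real machinery. The key preliminary step is to rewrite the defining relation of $T_{x, g}$: since $[x,\alpha] = x^{-1}\alpha(x)$ by definition, the equation $[x,\alpha] = g$ is equivalent to $\alpha(x) = xg$. Thus $T_{x, g}$ is precisely the set of automorphisms carrying $x$ to $xg$, and $\orb(x)$ is precisely the set of all images $\alpha(x)$; once this is observed, the equivalence is almost immediate.

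For the forward implication, I would assume $T_{x, g} \ne \phi$ and pick any $\sigma \in T_{x, g}$. The rewriting above gives $\sigma(x) = xg$, and since $\orb(x) = \{\alpha(x) : \alpha \in \Aut(G)\}$ contains $\sigma(x)$, we conclude $xg \in \orb(x)$. For the converse, if $xg \in \orb(x)$ then by the definition of the orbit there is some $\alpha \in \Aut(G)$ with $\alpha(x) = xg$; the same rewriting then yields $[x,\alpha] = x^{-1}\alpha(x) = x^{-1}(xg) = g$, so that $\alpha \in T_{x, g}$ and hence $T_{x, g} \ne \phi$.

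The only point requiring care, and the closest thing to an obstacle here, is the correct handling of the noncommutative autocommutator: one must keep the factor as $x^{-1}\alpha(x)$ rather than $\alpha(x)x^{-1}$, so that the equation $x^{-1}\alpha(x) = g$ solves to $\alpha(x) = xg$ on the correct side. Beyond this bookkeeping no further input is needed; in particular, neither the orbit--stabilizer relation \eqref{orbit-stabilizer-Thm} nor Lemma \ref{lemma1} is invoked in the proof of this equivalence, as those results enter only afterwards when one passes from the nonemptiness of $T_{x, g}$ to the actual count $|T_{x, g}| = |C_{\Aut(G)}(x)|$.
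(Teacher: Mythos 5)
Your proof is correct and follows exactly the same route as the paper, which simply notes that $\alpha \in T_{x,g}$ if and only if $\alpha(x) = xg$, i.e.\ if and only if $xg \in \orb(x)$. You merely spell out this definition-chase in both directions, which the paper compresses into a single sentence.
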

\begin{proof}
The result follows from the fact that  $\alpha \in T_{x, g}$ if and only if $xg \in  \orb(x)$.
\end{proof}

Now we state and prove the main theorem of this section.
\begin{theorem}\label{thm2.4}
Let $G$ be a finite group. If  $g \in G$ then
\[
{\Pr}_g(G,\Aut(G)) = \frac {1}{|G||\Aut(G)|}\underset{xg\in \orb(x)}{\underset{x\in G}{\sum}}|C_{\Aut(G)}(x)| = \frac {1}{|G|}\underset{xg\in \orb(x)}{\underset{x\in G}{\sum}}\frac {1}{|\orb(x)|}.
\]
\end{theorem}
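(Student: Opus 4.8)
The plan is to start from the defining expression \eqref{GenAutComDeg} and evaluate its numerator by fibering the solution set over its first coordinate. For each fixed $x \in G$, the automorphisms $\alpha$ satisfying $[x,\alpha] = g$ are by definition exactly the elements of $T_{x, g}$, so the count in the numerator decomposes as
\[
\left|\{(x,\alpha) \in G \times \Aut(G) : [x,\alpha] = g\}\right| = \underset{x\in G}{\sum} |T_{x, g}|.
\]
The whole computation then reduces to determining $|T_{x, g}|$ for each $x$, and the two preparatory lemmas have been set up precisely to supply this.

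Next I would invoke Lemma~\ref{lemma01} to identify which fibres are nonempty: $T_{x, g} \ne \phi$ if and only if $xg \in \orb(x)$. Hence every $x$ with $xg \notin \orb(x)$ contributes a zero term and may be dropped, so the sum restricts to those $x$ satisfying $xg \in \orb(x)$. For each surviving term, Lemma~\ref{lemma1} tells us that $T_{x, g}$ is a coset of $C_{\Aut(G)}(x)$, whence $|T_{x, g}| = |C_{\Aut(G)}(x)|$. Substituting this back gives
\[
\left|\{(x,\alpha) : [x,\alpha] = g\}\right| = \underset{xg\in \orb(x)}{\underset{x\in G}{\sum}} |C_{\Aut(G)}(x)|,
\]
and dividing by $|G||\Aut(G)|$ yields the first claimed equality.

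For the second equality I would simply rewrite the cardinality of each acentralizer using the orbit-stabilizer relation \eqref{orbit-stabilizer-Thm}, which rearranges to $\dfrac{|C_{\Aut(G)}(x)|}{|\Aut(G)|} = \dfrac{1}{|\orb(x)|}$. Absorbing the factor $\frac{1}{|\Aut(G)|}$ into the sum and applying this identity termwise over the same index set $\{x : xg \in \orb(x)\}$ transforms the first formula into the second, completing the argument.

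I do not anticipate any serious obstacle: the combination of Lemmas~\ref{lemma1} and~\ref{lemma01} together with the orbit-stabilizer theorem reduces the statement to bookkeeping. The only point that genuinely requires care is the treatment of the empty fibres---one must verify that restricting the summation index to $\{x : xg \in \orb(x)\}$ captures \emph{exactly} the set on which $|T_{x, g}|$ is nonzero, so that no terms are spuriously retained or discarded. This is precisely what the ``if and only if'' in Lemma~\ref{lemma01} guarantees, so the restriction is legitimate and the two sums are genuinely equal to the original count.
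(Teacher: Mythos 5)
Your proposal is correct and follows essentially the same route as the paper: the paper likewise writes the solution set as the disjoint union $\underset{x \in G}{\sqcup}(\{x\}\times T_{x, g})$, sums $|T_{x,g}|$ over $x$, and then cites Lemma~\ref{lemma1}, Lemma~\ref{lemma01} and \eqref{orbit-stabilizer-Thm} to finish. Your write-up merely makes explicit the bookkeeping (dropping empty fibres via Lemma~\ref{lemma01}, counting the rest via Lemma~\ref{lemma1}, and converting acentralizer orders to orbit sizes) that the paper compresses into its final sentence.
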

\begin{proof}
We have $\{(x,\alpha) \in G \times \Aut(G) : [x, \alpha] = g\} = \underset{x \in G}{\sqcup}(\{x\}\times T_{x, g})$, where $\sqcup$ represents the union of disjoint sets. Therefore, by \eqref{GenAutComDeg}, we have
\[
|G||\Aut(G)|{\Pr}_g(G,\Aut(G)) =  |\underset{x \in G}{\sqcup}(\{x\}\times T_{x, g})| = \underset{x \in G}{\sum}|T_{x, g}|.
\]
Hence, the result follows from Lemma \ref{lemma1}, Lemma \ref{lemma01} and \eqref{orbit-stabilizer-Thm}.
\end{proof}

Taking $g = 1$, in Theorem \ref{thm2.4}, we get the following corollary.
\begin{corollary}\label{formula2}
Let $G$ be a finite group. Then
\[
{\Pr}(G,\Aut(G)) = \frac {1}{|G||\Aut(G)|}\underset{x\in G}{\sum}|C_{\Aut(G)}(x)| = \frac {|\orb(G)|}{|G|}
\]
where $\orb(G) = \{\orb(x) : x \in G\}$.
\end{corollary}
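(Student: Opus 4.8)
The plan is to specialize Theorem~\ref{thm2.4} to the case $g = 1$ and then simplify both the index of summation and the summand. First I would invoke the observation, already recorded in the introduction, that ${\Pr}_1(G,\Aut(G)) = {\Pr}(G,\Aut(G))$, so the left-hand side of Theorem~\ref{thm2.4} becomes exactly the quantity we want to compute. Next I would examine the summation constraint $xg \in \orb(x)$. With $g = 1$ this reads $x \in \orb(x)$, which holds for every $x \in G$ because the identity automorphism lies in $\Aut(G)$ and fixes $x$. Hence the restricted sum in Theorem~\ref{thm2.4} runs over all of $G$, and the first displayed equality of the corollary follows immediately.

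For the second equality I would pass from acentralizers to orbit sizes via the orbit--stabilizer relation~\eqref{orbit-stabilizer-Thm}, rewriting $|C_{\Aut(G)}(x)| = |\Aut(G)|/|\orb(x)|$. Substituting this into the first expression cancels the factor $|\Aut(G)|$ and leaves
\[
{\Pr}(G,\Aut(G)) = \frac{1}{|G|}\sum_{x \in G}\frac{1}{|\orb(x)|},
\]
which is precisely the $g = 1$ instance of the second formula in Theorem~\ref{thm2.4}.

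The only remaining step, and the one point that requires a short argument rather than pure substitution, is the orbit-counting identity $\sum_{x \in G} 1/|\orb(x)| = |\orb(G)|$. To see this I would partition $G$ into its $\Aut(G)$-orbits. On a fixed orbit $O$ every element $x$ satisfies $|\orb(x)| = |O|$, so the $|O|$ terms contributed by $O$ each equal $1/|O|$ and sum to $1$. Summing over all orbits therefore yields the total number of orbits $|\orb(G)|$, and dividing by $|G|$ gives the right-hand side. There is no serious obstacle here; the content of the corollary is really the bookkeeping that converts the automorphism-fixing count into an orbit count, and this bookkeeping is exactly the standard averaging argument behind Burnside's lemma.
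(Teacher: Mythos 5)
Your proof is correct and follows the paper's own route exactly: the paper's proof is the one-line remark ``Taking $g=1$ in Theorem~\ref{thm2.4}, we get the following corollary,'' and your argument simply makes explicit the two details that remark leaves implicit (that the constraint $x\cdot 1\in\orb(x)$ is vacuous, and that $\sum_{x\in G}1/|\orb(x)|$ equals the number of orbits $|\orb(G)|$ via partitioning $G$ into orbits). Nothing in your write-up deviates from the intended argument; it is just a fully spelled-out version of it.
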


\begin{corollary}
Let $G$ be a finite group. If $C_{\Aut (G)}(x)=\{I\}$   for all $x \in G\setminus \{1\}$, where $I$ is the identity element of $\Aut(G)$, then
\[
\Pr(G,\Aut(G)) = \frac {1}{|G|} + \frac {1}{|\Aut(G)|} - \frac {1}{|G||\Aut(G)|}.
\]
\end{corollary}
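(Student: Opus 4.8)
The plan is to apply the computing formula from Corollary \ref{formula2}, namely
\[
{\Pr}(G,\Aut(G)) = \frac{1}{|G||\Aut(G)|}\sum_{x\in G}|C_{\Aut(G)}(x)|,
\]
and evaluate the sum by treating the identity element separately from the rest. The hypothesis $C_{\Aut(G)}(x)=\{I\}$ is stated only for $x\in G\setminus\{1\}$, so the first thing I would flag is that it cannot possibly hold at $x=1$: every automorphism fixes the identity, whence $C_{\Aut(G)}(1)=\Aut(G)$ and $|C_{\Aut(G)}(1)|=|\Aut(G)|$. Recognizing this special role of the identity is the only conceptual point in the argument.

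Next I would split the summation as
\[
\sum_{x\in G}|C_{\Aut(G)}(x)| = |C_{\Aut(G)}(1)| + \sum_{x\in G\setminus\{1\}}|C_{\Aut(G)}(x)|.
\]
The first term is $|\Aut(G)|$ by the observation above, and by hypothesis each summand in the second term equals $1$, so that piece contributes exactly $|G|-1$. Hence the total is $|\Aut(G)|+|G|-1$.

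Finally I would substitute this count back into the formula and simplify the single resulting fraction:
\[
{\Pr}(G,\Aut(G)) = \frac{|\Aut(G)|+|G|-1}{|G||\Aut(G)|} = \frac{1}{|G|} + \frac{1}{|\Aut(G)|} - \frac{1}{|G||\Aut(G)|},
\]
which is the claimed expression. There is no genuine obstacle here; the entire content is the case distinction at $x=1$, after which everything reduces to a short arithmetic manipulation of the formula already established in Corollary \ref{formula2}.
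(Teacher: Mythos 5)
Your proposal is correct and follows exactly the same route as the paper: apply Corollary \ref{formula2}, observe that the identity contributes $|C_{\Aut(G)}(1)|=|\Aut(G)|$ while each of the remaining $|G|-1$ elements contributes $1$, giving a total of $|\Aut(G)|+|G|-1$, and simplify. The paper's proof states this sum without spelling out the split at $x=1$; you have merely made that step explicit.
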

\begin{proof}
By Corollary \ref{formula2}, we have
\[
|G||\Aut(G)|\Pr(G,\Aut(G))  = \underset{x\in G}{\sum}|C_{\Aut(G)}(x)|
  =   |\Aut(G)| + |G| - 1.
\]
Hence, the result follows.
\end{proof}
\noindent
We also have
$|\{(x,\alpha) \in G \times \Aut(G) : [x, \alpha] = 1\}| = \underset{\alpha \in \Aut(G)}{\sum}|C_G(\alpha)|$ and hence
\begin{equation}
{\Pr}(G,\Aut(G)) = \frac {1}{|G||\Aut(G)|}\underset{\alpha \in \Aut(G)}{\sum}|C_G(\alpha)|.
\end{equation}

We conclude this section with the following two results.
\begin{proposition}
Let $G$ be a finite group. If $g \in G$ then
\[
{\Pr}_{g^{-1}}(G, \Aut(G)) = {\Pr}_g(G, \Aut(G)).
\]
\end{proposition}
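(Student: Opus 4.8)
The plan is to produce an explicit bijection between the two solution sets
\[
A_g := \{(x,\alpha) \in G \times \Aut(G) : [x,\alpha] = g\} \quad\text{and}\quad A_{g^{-1}} := \{(x,\alpha) \in G \times \Aut(G) : [x,\alpha] = g^{-1}\},
\]
since by \eqref{GenAutComDeg} the two probabilities differ only through the cardinalities $|A_g|$ and $|A_{g^{-1}}|$, the common denominator $|G||\Aut(G)|$ being irrelevant. The candidate map is $\phi : G \times \Aut(G) \to G \times \Aut(G)$ given by $\phi(x,\alpha) = (\alpha(x), \alpha^{-1})$.

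First I would check that $\phi$ is a bijection of the whole product; in fact it is an involution, because $\phi(\phi(x,\alpha)) = \phi(\alpha(x), \alpha^{-1}) = (\alpha^{-1}(\alpha(x)), (\alpha^{-1})^{-1}) = (x,\alpha)$. The key step is then to verify that $\phi$ carries $A_g$ into $A_{g^{-1}}$. Suppose $(x,\alpha) \in A_g$, so that $[x,\alpha] = x^{-1}\alpha(x) = g$, equivalently $\alpha(x) = xg$. Computing the autocommutator of the image pair gives
\[
[\alpha(x), \alpha^{-1}] = (\alpha(x))^{-1}\alpha^{-1}(\alpha(x)) = (\alpha(x))^{-1} x = (xg)^{-1} x = g^{-1} x^{-1} x = g^{-1},
\]
so $\phi(x,\alpha) \in A_{g^{-1}}$. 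Since $\phi$ is an involution, the same computation with $g$ replaced by $g^{-1}$ shows $\phi(A_{g^{-1}}) \subseteq A_g$; hence $\phi$ restricts to a bijection $A_g \to A_{g^{-1}}$, giving $|A_g| = |A_{g^{-1}}|$. Dividing by $|G||\Aut(G)|$ yields the asserted equality.

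I do not anticipate a serious obstacle: the whole argument rests on the single identity $[\alpha(x), \alpha^{-1}] = [x,\alpha]^{-1}$, which is immediate from the definition $[x,\alpha] = x^{-1}\alpha(x)$ once one is careful with the convention, the only mildly delicate point being the substitution $\alpha(x) = xg$ used to rewrite the inverse. An alternative would be to argue directly from the computing formula in Theorem~\ref{thm2.4}, but that route forces one to match the orbit conditions $xg \in \orb(x)$ and $xg^{-1} \in \orb(x)$ together with the weights $1/|\orb(x)|$, which is less transparent than the bijective argument above.
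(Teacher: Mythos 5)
Your proof is correct and uses exactly the same map $(x,\alpha)\mapsto(\alpha(x),\alpha^{-1})$ as the paper's own proof; the paper merely asserts this is a bijection between the two solution sets, while you additionally verify the involution property and the autocommutator computation $[\alpha(x),\alpha^{-1}]=g^{-1}$ explicitly. No discrepancies.
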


\begin{proof}
  Let 
  \begin{align*}
   A &= \{(x, \alpha) \in G \times \Aut(G) : [x, \alpha] = g\} \text{ and }\\
   B &= \{ (y, \beta) \in G \times \Aut(G) : [y, \beta] = g^{-1} \}.
  \end{align*}
  Then $(x, \alpha) \mapsto ( \alpha(x), \alpha ^{-1})$ gives a bijection between $A$ and $B$. Therefore $|A| = |B|$. Hence the result follows from \eqref{GenAutComDeg}.
  
\end{proof}

%

\begin{proposition}
Let $G$ and $H$ be two finite groups such that $\gcd(|G|,|H|) = 1$.  If $(g, h) \in G \times H$ then
\[
{\Pr}_{(g, h)}(G \times H,\Aut(G \times H)) = {\Pr}_{g}(G, \Aut(G)) {\Pr}_{h}(H, \Aut(H)).
\]
\end{proposition}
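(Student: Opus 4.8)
The plan is to reduce everything to the structure of $\Aut(G \times H)$ under the coprimality hypothesis. The key fact I would establish first is that when $\gcd(|G|,|H|) = 1$ every automorphism of $G \times H$ splits as a direct product of an automorphism of $G$ and one of $H$, so that $\Aut(G \times H) \cong \Aut(G) \times \Aut(H)$. Identifying $G$ with $G \times \{1\}$ and $H$ with $\{1\} \times H$, one checks that $G \times \{1\}$ consists precisely of those elements of $G \times H$ whose order divides $|G|$ (since $(x,y)$ has order $\mathrm{lcm}(|x|,|y|)$, and coprimality forces $y = 1$), and symmetrically for $\{1\} \times H$. Hence both factors are characteristic in $G \times H$, so any $\phi \in \Aut(G \times H)$ restricts to automorphisms $\alpha$ of $G$ and $\beta$ of $H$ with $\phi(x,y) = (\alpha(x), \beta(y))$. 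The assignment $\phi \mapsto (\alpha, \beta)$ is the required isomorphism, and in particular $|\Aut(G \times H)| = |\Aut(G)||\Aut(H)|$.

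Next I would compute the autocommutator componentwise. For $(x,y) \in G \times H$ and $\phi = \alpha \times \beta$, a direct calculation gives
\[
[(x,y), \phi] = (x,y)^{-1}\phi(x,y) = (x^{-1}\alpha(x),\, y^{-1}\beta(y)) = ([x,\alpha],\, [y,\beta]).
\]
Therefore $[(x,y), \phi] = (g,h)$ holds if and only if $[x,\alpha] = g$ and $[y,\beta] = h$ hold simultaneously. This decouples the defining condition into independent conditions on the two coordinates.

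Consequently the map $((x,y),\, \alpha \times \beta) \mapsto ((x,\alpha),(y,\beta))$ is a bijection between the set $\{((x,y),\phi) \in (G \times H) \times \Aut(G \times H) : [(x,y),\phi] = (g,h)\}$ and the product set $\{(x,\alpha) : [x,\alpha] = g\} \times \{(y,\beta) : [y,\beta] = h\}$. Counting cardinalities, the numerator in the definition \eqref{GenAutComDeg} for $\Pr_{(g,h)}(G \times H, \Aut(G \times H))$ factors as the product of the corresponding numerators for $G$ and $H$, while the denominator factors as $|G \times H||\Aut(G \times H)| = |G||H||\Aut(G)||\Aut(H)|$. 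Dividing one factorization by the other gives the claimed identity directly.

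The main obstacle is the first step, namely the isomorphism $\Aut(G \times H) \cong \Aut(G) \times \Aut(H)$, since this is exactly where the hypothesis $\gcd(|G|,|H|) = 1$ is used; without it an automorphism could mix the two factors and the statement would fail. Everything after this structural fact is a routine componentwise computation and a counting argument, so I would be careful to justify that $G \times \{1\}$ and $\{1\} \times H$ are characteristic before asserting that every $\phi$ splits.
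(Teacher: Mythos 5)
Your proposal is correct and follows essentially the same route as the paper: both rest on the factorization $\Aut(G \times H) = \Aut(G) \times \Aut(H)$ under the coprimality hypothesis, the componentwise decoupling $[(x,y),\alpha\times\beta] = ([x,\alpha],[y,\beta])$, and the resulting product decomposition of the relevant set of pairs. The only difference is that the paper simply cites this factorization of the automorphism group (as Lemma 2.1 of the Hillar--Rhea reference), whereas you prove it directly via the observation that $G \times \{1\}$ and $\{1\} \times H$ are characteristic because they are characterized by element orders --- a valid and self-contained justification.
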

\begin{proof}
Let
\begin{align*}
\mathcal{X} &= \{((x, y), \alpha_{G \times H}) \in (G \times H) \times \Aut(G \times H) : [(x, y), \alpha_{G \times H}] = (g, h)\},\\
\mathcal{Y} &= \{(x, \alpha_G) \in G \times \Aut(G) : [x, \alpha_G] = g\} \text{ and }\\
\mathcal{Z} &= \{(y, \alpha_H) \in H \times \Aut(H) : [y, \alpha_H] = h\}.
\end{align*}
Since  $\gcd(|G|,|H|) = 1$, by \cite[Lemma 2.1]{cD07},  we have $\Aut(G \times H) = \Aut(G)\times \Aut(H)$. Therefore, for every $\alpha_{G \times H} \in \Aut(G \times H)$ there exist unique $\alpha_G \in  \Aut(G)$  and $\alpha_H \in  \Aut(H)$ such that $\alpha_{G \times H} = \alpha_G \times \alpha_H$, where $\alpha_G \times \alpha_H((x, y)) = (\alpha_G(x), \alpha_H(y))$ for all $(x, y) \in G \times H$. Also, for all $(x, y) \in G \times H$, we have $[(x, y), \alpha_{G \times H}] = (g, h)$ if and only if $[x, \alpha_G] = g$ and $[y, \alpha_H] = h$. These leads to show that $\mathcal{X} = \mathcal{Y} \times \mathcal{Z}$. Therefore
\[
\frac{|\mathcal{X}|}{|G \times H||\Aut(G \times H)|}
= \frac{|\mathcal{Y}|}{|G||\Aut(G)|}\cdot\frac{|\mathcal{Z}|}{|H||\Aut(H)|}.
\]
Hence, the result follows from \eqref{AutComDeg}.
\end{proof}

\section{Some bounds}
 We begin with the following lower bounds.

\begin{proposition}
Let $G$ be a finite group. Then
\begin{enumerate}
\item  
${\Pr}_g(G,\Aut(G))\geq \frac {|L(G)|}{|G|} +  \frac {|C_{\Aut(G)}(G)|(|G|-|L(G)|)}{|G||\Aut(G)|}$ if $g = 1$.
\item   
${\Pr}_g(G,\Aut(G))\geq \frac {|L(G)||C_{\Aut(G)}(G)|}{|G||\Aut(G)|}$ if  $g\neq 1$.
\end{enumerate}
\end{proposition}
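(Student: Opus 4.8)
The plan is to derive both inequalities from the computing formula of Theorem~\ref{thm2.4}, which writes $|G||\Aut(G)|\,{\Pr}_g(G,\Aut(G)) = \sum |C_{\Aut(G)}(x)|$, the sum being taken over those $x \in G$ with $xg \in \orb(x)$. In each case I would bound this sum below by summing over a suitable subset of the admissible indices and estimating the individual summands. Two facts will be used repeatedly: since $C_{\Aut(G)}(G) = \bigcap_{y \in G} C_{\Aut(G)}(y) \subseteq C_{\Aut(G)}(x)$ for every $x$, we always have $|C_{\Aut(G)}(x)| \geq |C_{\Aut(G)}(G)|$; and if $x \in L(G)$ then $\alpha(x) = x$ for all $\alpha$, so that $C_{\Aut(G)}(x) = \Aut(G)$ and $\orb(x) = \{x\}$.

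For part (a), where $g = 1$, every $x$ satisfies $x\cdot 1 = x \in \orb(x)$, so the sum runs over all of $G$, and I would split it according to membership in $L(G)$. The $|L(G)|$ terms coming from $x \in L(G)$ each equal $|\Aut(G)|$, while each of the $|G| - |L(G)|$ remaining terms is at least $|C_{\Aut(G)}(G)|$. Adding the two contributions and dividing by $|G||\Aut(G)|$ gives the claimed bound at once.

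For part (b), where $g \neq 1$, no element of $L(G)$ is admissible, since $xg \in \orb(x) = \{x\}$ would force $g = 1$; so the factor $|L(G)|$ must be produced another way. Because $g \in S(G,\Aut(G))$, there exist $x_0 \in G$ and $\sigma \in \Aut(G)$ with $[x_0,\sigma] = g$, so $x_0$ is admissible. The crux is to show that the entire coset $L(G)x_0$ is admissible: for $y \in L(G)$ one computes $[yx_0,\sigma] = x_0^{-1}y^{-1}\sigma(y)\sigma(x_0) = x_0^{-1}\sigma(x_0) = [x_0,\sigma] = g$, where the cancellation uses only that $\sigma$ fixes $y$, a consequence of $y \in L(G)$. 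These $|L(G)|$ distinct admissible elements each contribute at least $|C_{\Aut(G)}(G)|$ to the sum, and dividing by $|G||\Aut(G)|$ yields the bound.

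I expect the only real content to be the coset observation in part (b): recognizing that a whole translate $L(G)x_0$ of the absolute center is admissible as soon as a single $x_0$ is, which hinges on $L(G)$ being fixed pointwise by every automorphism (its containment in $Z(G)$ is also available, should one prefer to argue with the coset $x_0 L(G)$). The remaining steps, namely partitioning the index set and invoking the inclusion $C_{\Aut(G)}(G) \subseteq C_{\Aut(G)}(x)$, are routine.
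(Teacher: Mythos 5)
Your proof is correct, but it is organized differently from the paper's. The paper never invokes Theorem \ref{thm2.4}: it works directly with the pair set $\mathcal{C} = \{(x,\alpha) \in G \times \Aut(G) : [x,\alpha] = g\}$, exhibits explicit subsets of it, and divides by $|G||\Aut(G)|$ using \eqref{GenAutComDeg}. In (a) it takes the union $(L(G)\times\Aut(G)) \cup (G \times C_{\Aut(G)}(G)) \subseteq \mathcal{C}$ and counts it by inclusion--exclusion; your splitting of $\sum_{x}|C_{\Aut(G)}(x)|$ over $L(G)$ and its complement is the same count read off fiber by fiber, routed through the computing formula. In (b) the paper picks one pair $(y,\beta) \in \mathcal{C}$ and shows that the entire coset $(y,\beta)\bigl(L(G)\times C_{\Aut(G)}(G)\bigr)$ of the subgroup $L(G)\times C_{\Aut(G)}(G)$ of $G \times \Aut(G)$ lies in $\mathcal{C}$, so $|\mathcal{C}| \geq |L(G)||C_{\Aut(G)}(G)|$ at once; you translate only the $G$-coordinate (the coset $L(G)x_0$) and then recover the factor $|C_{\Aut(G)}(G)|$ from Lemma \ref{lemma1}, via $|T_{x,g}| = |C_{\Aut(G)}(x)| \geq |C_{\Aut(G)}(G)|$ --- the same translation idea carried out one coordinate at a time. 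A small dividend of your arrangement: writing the translate as $yx_0$ with $y \in L(G)$ on the left, the cancellation $y^{-1}\sigma(y)=1$ uses only that automorphisms fix $L(G)$ pointwise, whereas the paper's right-hand translate $(yl,\beta\gamma)$ produces $l^{-1}gl$ and therefore also needs $L(G) \subseteq Z(G)$. What the paper's version buys in return is self-containedness (it needs only the definition, not Theorem \ref{thm2.4} or Lemma \ref{lemma1}), and in (b) it realizes the lower bound as the exact cardinality of a single coset rather than as a sum of termwise estimates.
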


\begin{proof}
Let $\mathcal{C} := \{(x,\alpha) \in G\times \Aut(G): [x,\alpha] = g\}$.

(a) We have $(L(G) \times \Aut(G))\cup (G \times C_{\Aut(G)}(G)) \subseteq \mathcal{C}$ and
$|(L(G) \times \Aut(G))\cup (G \times C_{\Aut(G)}(G))|$ is equal to $|L(G)||\Aut(G)| + |C_{\Aut(G)}(G)||G| - |L(G)||C_{\Aut(G)}(G)|$. Hence, the result follows from \eqref{GenAutComDeg}.

(b) Since $g\in S(G,\Aut (G))$ we have $\mathcal{C}$ is non-empty. Let $(y, \beta) \in \mathcal{C}$ then $(y, \beta) \notin  L(G)\times C_{\Aut(G)}(G)$ otherwise $[y, \beta] = 1$. It is easy to see that the coset $(y, \beta)(L(G)\times C_{\Aut(G)}(G))$ having order $|L(G)||C_{\Aut(G)}(G)|$ is a subset of $\mathcal{C}$. Hence, the result follows from \eqref{GenAutComDeg}.
\end{proof}

\begin{proposition}\label{prop3.2}
Let $G$ be a finite group. Then
\[
{\Pr}_g(G,\Aut(G)) \leq \Pr(G,\Aut(G)).
\]
The equality holds if and only if $g = 1$.
\end{proposition}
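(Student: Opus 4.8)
The plan is to read off both probabilities from the computing formula of Theorem~\ref{thm2.4} and compare them term by term. Recall that
\[
{\Pr}_g(G,\Aut(G)) = \frac{1}{|G|}\underset{xg\in \orb(x)}{\underset{x\in G}{\sum}}\frac{1}{|\orb(x)|},
\]
while taking $g = 1$ makes the summation condition $xg \in \orb(x)$ read $x \in \orb(x)$, which holds for \emph{every} $x \in G$ because the identity automorphism fixes $x$; thus Corollary~\ref{formula2} gives
\[
{\Pr}(G,\Aut(G)) = \frac{1}{|G|}\underset{x\in G}{\sum}\frac{1}{|\orb(x)|}.
\]

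First I would observe that the index set $\{x \in G : xg \in \orb(x)\}$ of the first sum is a subset of $G$, the index set of the second. Since every summand $\frac{1}{|\orb(x)|}$ is strictly positive, restricting to the smaller index set can only decrease the total. This immediately yields ${\Pr}_g(G,\Aut(G)) \leq {\Pr}(G,\Aut(G))$, the desired inequality.

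For the equality condition, I would again exploit the strict positivity of the summands: the two sums are equal precisely when their index sets coincide, that is, when $xg \in \orb(x)$ for \emph{all} $x \in G$. If $g = 1$ this plainly holds. For the converse I would evaluate the hypothesis at $x = 1$. Every automorphism fixes the identity, so $\orb(1) = \{1\}$, and the requirement $xg \in \orb(x)$ at $x = 1$ forces $g \in \{1\}$, i.e. $g = 1$.

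Once the computing formula is available the argument is essentially immediate, so I do not anticipate a genuine obstacle. The only point requiring a little care is the equality direction: one must explicitly invoke the strict positivity of each $\frac{1}{|\orb(x)|}$ to pass from equality of the two sums to equality of their index sets, after which the trivial computation $\orb(1) = \{1\}$ finishes the proof.
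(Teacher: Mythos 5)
Your proof is correct and follows essentially the same route as the paper: both compare the two sums from the computing formula of Theorem~\ref{thm2.4} via inclusion of index sets, and both reduce equality to the condition $xg \in \orb(x)$ for all $x \in G$, which forces $g = 1$. The only cosmetic differences are that you use the orbit-size form of the formula rather than the stabilizer-size form, and you spell out the evaluation at $x = 1$ (using $\orb(1) = \{1\}$) that the paper leaves implicit.
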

\begin{proof}
By Theorem \ref{thm2.4}, we have
\begin{align*}
{\Pr}_g(G, \Aut(G)) &= \frac {1}{|G||\Aut(G)|}\underset{xg\in \orb(x)}{\underset{x \in G}{\sum}}|C_{\Aut(G)}(x)|\\
&\leq \frac {1}{|G||\Aut(G)|}\underset{x \in G}{\sum}|C_{\Aut(G)}(x)| = \Pr(G,\Aut(G)).
\end{align*}
The equality holds if and only if $xg\in \orb(x)$ for all $x \in G$ if and only if $g = 1$.
\end{proof}

\begin{proposition}
Let $G$ be a finite group and $p$ the smallest prime dividing $|\Aut(G)|$. If  $g \neq 1$ then
\[
{\Pr}_g(G, \Aut(G))\leq \frac {|G| - |L(G)|}{p|G|} < \frac {1}{p}.
\]
\end{proposition}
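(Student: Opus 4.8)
The plan is to run everything off the orbit-counting form of the formula from Theorem \ref{thm2.4}, namely
\[
{\Pr}_g(G,\Aut(G)) = \frac {1}{|G|}\underset{xg\in \orb(x)}{\underset{x\in G}{\sum}}\frac {1}{|\orb(x)|},
\]
and to extract from the hypothesis $g \neq 1$ the fact that the absolute center contributes nothing to this sum. First I would observe that if $x \in L(G)$ then every automorphism fixes $x$, so $\orb(x) = \{x\}$; the membership condition $xg \in \orb(x)$ then forces $xg = x$, i.e. $g = 1$, contrary to hypothesis. Hence the index set of the sum is contained in $G \setminus L(G)$, and in particular the running count of surviving terms is at most $|G| - |L(G)|$.

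Next I would bound each surviving summand $1/|\orb(x)|$ by $1/p$. For $x \notin L(G)$ some automorphism moves $x$, so $|\orb(x)| \geq 2$; and by the orbit–stabilizer relation \eqref{orbit-stabilizer-Thm}, $|\orb(x)|$ divides $|\Aut(G)|$. Any integer exceeding $1$ that divides $|\Aut(G)|$ must have every prime factor at least $p$, the smallest prime dividing $|\Aut(G)|$, and therefore $|\orb(x)| \geq p$. Consequently $1/|\orb(x)| \leq 1/p$ for every $x$ occurring in the sum.

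Finally I would discard the constraint $xg \in \orb(x)$, which only enlarges the sum by adjoining non-negative terms indexed by $G \setminus L(G)$, to obtain
\[
{\Pr}_g(G,\Aut(G)) \leq \frac{1}{|G|}\sum_{x \in G \setminus L(G)}\frac{1}{p} = \frac{|G| - |L(G)|}{p|G|},
\]
which is the first inequality. The strict inequality then follows at once from $1 \in L(G)$, so that $|L(G)| \geq 1$ and hence $|G| - |L(G)| < |G|$, giving $\tfrac{|G| - |L(G)|}{p|G|} < \tfrac{1}{p}$.

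The argument is almost entirely bookkeeping; there is no serious obstacle. The only two points requiring care are the exclusion of $L(G)$ from the sum (which is exactly where the assumption $g \neq 1$ is used) and the elementary number-theoretic step $|\orb(x)| \geq p$, which rests on combining $|\orb(x)| > 1$ with the divisibility $|\orb(x)| \mid |\Aut(G)|$ via the definition of $p$ as the smallest prime divisor.
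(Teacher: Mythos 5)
Your proposal is correct and follows essentially the same route as the paper: both start from the orbit-sum formula of Theorem \ref{thm2.4}, use $g \neq 1$ to exclude $L(G)$ from the index set, bound each surviving orbit size below by $p$ via divisibility of $|\orb(x)|$ in $|\Aut(G)|$, and get the strict inequality from $|L(G)| \geq 1$. The only cosmetic difference is that you explicitly discard the constraint $xg \in \orb(x)$ to enlarge the sum, whereas the paper simply counts the constrained sum as having at most $|G| - |L(G)|$ terms.
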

\begin{proof}
By Theorem \ref{thm2.4}, we have
\begin{equation}\label{eq3.1}
{\Pr}_g(G,\Aut(G)) = \frac {1}{|G|}\underset{xg\in \orb(x)}{\underset{x\in G \setminus L(G)}{\sum}}\frac {1}{|\orb(x)|}
\end{equation}
noting that for $x \in L(G)$ we have $xg \notin \orb(x)$. Also, for $x\in G \setminus L(G)$ and $xg \in \orb(x)$ we have $|\orb(x)| > 1$. Since $|\orb(x)|$ is a divisor of $|\Aut(G)|$ we have $|\orb(x)| \geq p$. Hence, the result follows from \eqref{eq3.1}.
\end{proof}


The remaining part of this section is devoted in obtaining some lower and upper bounds for  $\Pr(G,\Aut(G))$. The following theorem is an improvement of \cite[Theorem 2.3 (ii)]{moga}.
\begin{theorem}\label{thm3.6}
Let $G$ be a finite group and $p$ the smallest prime dividing $|\Aut(G)|$.  Then
\[
\Pr(G,\Aut(G)) \geq \frac {|L(G)|}{|G|} + \frac {p(|G| - |X_G| - |L(G)|) + |X_G|}{|G||\Aut(G)|}
\]
and
\[
\Pr(G,\Aut(G)) \leq \frac {(p - 1)|L(G)| + |G|}{p|G|} - \frac {|X_G|(|\Aut(G)| - p)}{p|G||\Aut(G)|},
\]
where $X_G = \{x \in G : C_{\Aut(G)}(x) = \{I\}\}$.
\end{theorem}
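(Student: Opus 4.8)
The plan is to start from the formula in Corollary~\ref{formula2},
\[
\Pr(G,\Aut(G)) = \frac{1}{|G||\Aut(G)|}\sum_{x\in G}|C_{\Aut(G)}(x)|,
\]
and to estimate the sum $\sum_{x\in G}|C_{\Aut(G)}(x)|$ by partitioning $G$ according to the size of the acentralizer $C_{\Aut(G)}(x)$. Since $p$ exists we have $|\Aut(G)|>1$, so the three pieces $L(G)$, $X_G$ and $G\setminus(L(G)\cup X_G)$ are pairwise disjoint and cover $G$. On the first piece every automorphism fixes $x$, whence $C_{\Aut(G)}(x)=\Aut(G)$ and $|C_{\Aut(G)}(x)|=|\Aut(G)|$; on the second piece $|C_{\Aut(G)}(x)|=1$ by the definition of $X_G$; the nontrivial estimates are needed only on the third piece.

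First I would establish the two-sided bound $p\le|C_{\Aut(G)}(x)|\le|\Aut(G)|/p$ for every $x\in G\setminus(L(G)\cup X_G)$. The lower bound holds because $x\notin X_G$ forces $|C_{\Aut(G)}(x)|>1$, and this order divides $|\Aut(G)|$ by Lagrange's theorem, hence is at least the smallest prime divisor $p$. For the upper bound I would use the orbit--stabilizer relation \eqref{orbit-stabilizer-Thm}: since $x\notin L(G)$ there is an automorphism moving $x$, so $|\orb(x)|>1$; as $|\orb(x)|$ also divides $|\Aut(G)|$ we get $|\orb(x)|\ge p$, and therefore $|C_{\Aut(G)}(x)|=|\Aut(G)|/|\orb(x)|\le|\Aut(G)|/p$.

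With these bounds in hand, and noting that the third piece contains $|G|-|L(G)|-|X_G|$ elements, I would sum the contributions from the three pieces. Replacing each acentralizer order on the third piece by the lower value $p$ gives
\[
\sum_{x\in G}|C_{\Aut(G)}(x)|\ge |L(G)||\Aut(G)|+|X_G|+p\bigl(|G|-|L(G)|-|X_G|\bigr),
\]
and dividing by $|G||\Aut(G)|$ and collecting terms yields the stated lower bound. Replacing each such order by the upper value $|\Aut(G)|/p$ instead produces the corresponding upper estimate for the sum, and after the same division together with a short rearrangement (combining the $|L(G)|$ and $|X_G|$ terms over the common denominator $p|G||\Aut(G)|$) one recovers the stated upper bound.

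The computation is routine; the only point requiring genuine care is the two-sided divisibility argument on the middle piece, where the lower bound rests on Lagrange's theorem and the upper bound on the orbit--stabilizer theorem, and where one must confirm that the defining conditions of $L(G)$ and $X_G$ exclude an element from both sets precisely because $|\Aut(G)|>1$, which is exactly the hypothesis guaranteeing that $p$ is defined.
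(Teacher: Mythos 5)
Your proposal is correct and follows essentially the same route as the paper: the same partition of $G$ into $L(G)$, $X_G$ and $G\setminus(L(G)\cup X_G)$, the same two-sided bound $p\le|C_{\Aut(G)}(x)|\le|\Aut(G)|/p$ on the middle piece, and the same substitution into Corollary~\ref{formula2}. The only cosmetic difference is that you justify the upper bound via the orbit--stabilizer relation \eqref{orbit-stabilizer-Thm}, whereas the paper argues directly from $C_{\Aut(G)}(x)\neq\Aut(G)$ that its index, a divisor of $|\Aut(G)|$ exceeding $1$, is at least $p$ --- these are the same argument.
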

\begin{proof}
We have   $X_G \cap L(G) = \phi$. Therefore
\[
\underset{x\in G}{\sum}|C_{\Aut(G)}(x)| = |X_G| + |\Aut(G)||L(G)| + \underset{x\in G\setminus (X_G\cup L(G))}{\sum}|C_{\Aut(G)}(x)|.
\]
For $x  \in G \setminus (X_G \cup L(G))$ we have $\{I\}\neq C_{\Aut(G)}(x)\neq \Aut(G)$ which implies $p \leq |C_{\Aut(G)}(x)|\leq \frac {|\Aut(G)|}{p}$. Therefore
\begin{equation}\label{our_bound1}
\underset{x\in G}{\sum}|C_{\Aut(G)}(x)| \geq |X_G| + |\Aut(G)||L(G)| + p(|G| - |X_G| - |L(G)|)
\end{equation}
and
\begin{equation}\label{our_bound2}
\underset{x\in G}{\sum}|C_{\Aut(G)}(x)| \leq |X_G| + |\Aut(G)||L(G)| + \frac{|\Aut(G)|(|G| - |X_G| - |L(G)|)}{p}.
\end{equation}
Hence, the result follows from Corollary \ref{formula2}, \eqref{our_bound1} and \eqref{our_bound2}.
\end{proof}

We have the following two corollaries.
\begin{corollary}\label{bound_like3/4}
Let $G$ be a finite group. If $p$ and $q$ are the smallest primes dividing  $|\Aut(G)|$ and $|G|$ respectively then
\[
\Pr(G,\Aut(G)) \leq \frac{p + q - 1}{pq}.
\]
In particular, if $p = q$ then $\Pr(G,\Aut(G)) \leq \frac{2p - 1}{p^2} \leq \frac{3}{4}$.
\end{corollary}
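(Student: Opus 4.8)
The plan is to start from the upper bound already proved in Theorem \ref{thm3.6} and reduce it to the stated form by controlling the two group-theoretic quantities appearing in it, namely $|L(G)|$ and $|X_G|$. The recorded bound is
\[
\Pr(G,\Aut(G)) \leq \frac {(p - 1)|L(G)| + |G|}{p|G|} - \frac {|X_G|(|\Aut(G)| - p)}{p|G||\Aut(G)|}.
\]
First I would observe that the subtracted term is nonnegative: we have $|X_G| \ge 0$, and $|\Aut(G)| \ge p$ since $p$ divides $|\Aut(G)|$, so $|\Aut(G)| - p \ge 0$. Discarding this term only weakens the inequality, so it suffices to bound $\frac{(p-1)|L(G)| + |G|}{p|G|}$ from above.

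The key step is to estimate $|L(G)|$ using the smallest prime $q$ dividing $|G|$. Under the standing hypothesis $G \ne L(G)$, the absolute center is a proper subgroup, so its index $[G:L(G)]$ is a divisor of $|G|$ that is strictly greater than $1$; every such divisor is at least $q$ by minimality of $q$. Hence $[G:L(G)] \ge q$, which gives $|L(G)| \le |G|/q$. Substituting into the numerator, $(p-1)|L(G)| + |G| \le |G|\bigl(\tfrac{p-1}{q} + 1\bigr) = |G|\cdot\tfrac{p+q-1}{q}$, and dividing by $p|G|$ yields exactly $\frac{p+q-1}{pq}$, as required.

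For the ``in particular'' clause I would set $p = q$, so the bound becomes $\frac{2p-1}{p^2}$, and then verify $\frac{2p-1}{p^2} \le \frac34$ for every prime $p$. The cleanest route is the algebraic equivalence $\frac{2p-1}{p^2} \le \frac34 \iff 3p^2 - 8p + 4 \ge 0 \iff (3p-2)(p-2) \ge 0$, which holds for all $p \ge 2$ since both factors are nonnegative, with equality precisely at $p = 2$.

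I do not expect a serious obstacle here, as the argument is essentially a specialization of Theorem \ref{thm3.6}. The only point requiring genuine care is the bound $|L(G)| \le |G|/q$: this is where the prime $q$ enters the final estimate, and it relies on combining the standing assumption $G \ne L(G)$ with the minimality of $q$ among the prime divisors of $|G|$ to force $[G:L(G)] \ge q$.
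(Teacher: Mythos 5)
Your proof is correct and follows essentially the same route as the paper: drop the nonnegative $|X_G|$ term from Theorem \ref{thm3.6}, use $G \ne L(G)$ together with the minimality of $q$ to get $|G : L(G)| \geq q$, and substitute. The paper's proof is just a terser version of yours (it also leaves the $p=q$ verification implicit, which you carried out correctly).
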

\begin{proof}
Since $G \ne L(G)$ we have $|G : L(G)| \geq q$. Therefore, by Theorem \ref{thm3.6}, we have
\[
\Pr(G,\Aut(G)) \leq \frac{1}{p}\left(\frac{p - 1}{|G : L(G)|} + 1\right) \leq \frac{p + q - 1}{pq}.
\]
\end{proof}

\begin{corollary}\label{bound_like5/8}
Let $G$ be a finite group and $p$, $q$ be the smallest primes dividing  $|\Aut(G)|$ and $|G|$ respectively. If $G$ is non-abelian then
\[
\Pr(G,\Aut(G)) \leq \frac{q^2 + p - 1}{pq^2}.
\]
In particular, if $p = q$ then $\Pr(G,\Aut(G)) \leq \frac{p^2 + p - 1}{p^3} \leq \frac{5}{8}$.
\end{corollary}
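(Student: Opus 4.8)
The plan is to imitate the proof of Corollary \ref{bound_like3/4}, strengthening the index estimate on $L(G)$ by exploiting the non-abelian hypothesis. As in that corollary, I would begin from the upper bound in Theorem \ref{thm3.6} and discard the final subtracted term, which is non-negative because $|\Aut(G)| \geq p$ (so $|\Aut(G)| - p \geq 0$ and every other factor is non-negative). Dropping it therefore preserves the inequality and yields
\[
\Pr(G,\Aut(G)) \leq \frac{(p-1)|L(G)| + |G|}{p|G|} = \frac{1}{p}\left(\frac{p-1}{|G:L(G)|} + 1\right),
\]
so that everything reduces to bounding the index $|G:L(G)|$ from below.

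The heart of the argument is to show $|G:L(G)| \geq q^2$. Since $L(G) \subseteq Z(G)$, it suffices to prove $|G:Z(G)| \geq q^2$. Here I would invoke the classical fact that if $G/Z(G)$ is cyclic then $G$ is abelian; as $G$ is non-abelian, $G/Z(G)$ is non-trivial and non-cyclic. In particular $|G:Z(G)|$ is neither $1$ nor a prime (a group of prime order being cyclic), hence it is composite, and all of its prime divisors divide $|G|$ and so are at least $q$. The smallest composite integer all of whose prime factors are $\geq q$ is $q^2$, which gives $|G:Z(G)| \geq q^2$ and therefore $|G:L(G)| \geq q^2$.

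Substituting $|G:L(G)| \geq q^2$ into the displayed bound then gives
\[
\Pr(G,\Aut(G)) \leq \frac{1}{p}\left(\frac{p-1}{q^2} + 1\right) = \frac{q^2 + p - 1}{pq^2},
\]
which is the asserted inequality. For the special case $p = q$, this specializes to $\frac{p^2 + p - 1}{p^3}$, and rewriting it as $\frac{1}{p} + \frac{1}{p^2} - \frac{1}{p^3}$ shows the quantity is decreasing in $p$; its maximum occurs at $p = 2$, where it equals $\frac{5}{8}$, yielding the final estimate.

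I expect the only genuine obstacle to be the index estimate $|G:L(G)| \geq q^2$. One must verify carefully that $|G:Z(G)|$ cannot equal a prime (excluded because prime-order quotients are cyclic, which would force $G$ abelian) and cannot carry a prime factor smaller than $q$ (excluded because such a factor would divide $|G|$), so that $q^2$ is a legitimate lower bound. Everything else is a routine substitution paralleling Corollary \ref{bound_like3/4}.
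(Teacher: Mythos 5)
Your proof is correct and takes essentially the same approach as the paper: drop the non-negative $X_G$ term from the upper bound of Theorem \ref{thm3.6} and substitute $|G:L(G)| \geq q^2$. The only difference is that you spell out the justification of $|G:L(G)| \geq q^2$ (via $L(G) \subseteq Z(G)$ and the fact that $G/Z(G)$ cyclic forces $G$ abelian), a step the paper asserts without proof.
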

\begin{proof}
Since $G$ is non-abelian we have $|G : L(G)| \geq q^2$. Therefore, by Theorem \ref{thm3.6}, we have
\[
\Pr(G,\Aut(G)) \leq \frac{1}{p}\left(\frac{p - 1}{|G : L(G)|} + 1\right) \leq \frac{q^2 + p - 1}{pq^2}.
\]
\end{proof}

\begin{theorem}\label{prop3.8}
Let $G$ be a finite group. Then
\[
\Pr(G,\Aut(G)) \geq \frac {1}{|S(G,\Aut(G))|}\left(1 + \frac {|S(G,\Aut(G))| - 1}{|G : L(G)|}\right).
\]
The equality holds if and only if $\orb(x) = xS(G,\Aut(G))$
 for all $x \in G \setminus L(G)$.
\end{theorem}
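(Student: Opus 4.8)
The plan is to reduce everything to counting the orbits of the natural $\Aut(G)$-action on $G$ and then to exploit a single structural inclusion. Writing $S := S(G,\Aut(G))$, the crucial observation is that for every $x \in G$ one has $\alpha(x) = x(x^{-1}\alpha(x)) = x[x,\alpha]$, so that $\orb(x) = \{x[x,\alpha] : \alpha \in \Aut(G)\} \subseteq xS$ and hence $|\orb(x)| \le |S|$. By Corollary \ref{formula2} we already know that $\Pr(G,\Aut(G)) = |\orb(G)|/|G|$, so it suffices to bound the number of orbits $|\orb(G)|$ from below.

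Next I would split the orbits according to whether they lie inside $L(G)$. An orbit is a singleton $\{x\}$ exactly when $\alpha(x)=x$ for all $\alpha$, i.e. precisely when $x \in L(G)$; this accounts for exactly $|L(G)|$ singleton orbits, covering all of $L(G)$. The remaining orbits partition $G \setminus L(G)$, a set of size $|G| - |L(G)|$, and by the inclusion above each of them has size at most $|S|$. Consequently the number $k$ of these non-singleton orbits satisfies $k|S| \ge |G| - |L(G)|$, that is $k \ge (|G| - |L(G)|)/|S|$, and therefore
\[
|\orb(G)| = |L(G)| + k \ge |L(G)| + \frac{|G| - |L(G)|}{|S|}.
\]

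Dividing by $|G|$ and rearranging should then yield the claimed inequality: a direct computation gives $\tfrac{1}{|G|}\big(|L(G)| + \tfrac{|G|-|L(G)|}{|S|}\big) = \tfrac{1}{|S|}\big(1 + \tfrac{|S|-1}{|G:L(G)|}\big)$, as one checks by multiplying both sides by $|G||S|$ and comparing numerators (using $|G:L(G)| = |G|/|L(G)|$). For the equality statement, equality propagates back through the chain exactly when $k|S| = |G|-|L(G)|$, i.e. when every orbit contained in $G\setminus L(G)$ attains the maximal size $|S|$; since $\orb(x) \subseteq xS$ with $|xS| = |S|$, this happens precisely when $\orb(x) = xS$ for each $x \in G \setminus L(G)$, which is the stated condition. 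I do not expect a genuine obstacle here: the one idea that does all the work is the inclusion $\orb(x)\subseteq xS$, and the rest is orbit bookkeeping together with a routine algebraic identity; the only point demanding a little care is tracking that the equality condition is forced orbit-by-orbit rather than merely on average.
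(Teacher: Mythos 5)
Your proof is correct and takes essentially the same route as the paper's: the key inclusion $\orb(x) \subseteq xS(G,\Aut(G))$, the decomposition of $G$ into $L(G)$ and its complement, and the orbit-by-orbit equality analysis are identical. The only cosmetic difference is that you count orbits via $\Pr(G,\Aut(G)) = |\orb(G)|/|G|$ whereas the paper bounds the equivalent sum $\frac{1}{|G|}\sum_{x\in G}\frac{1}{|\orb(x)|}$ term by term; these are just the two equal expressions in Corollary \ref{formula2}.
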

\begin{proof}
For all $x \in G \setminus L(G)$ we have $\alpha (x) = x[x, \alpha] \in xS(G,\Aut(G))$. Therefore $\orb(x) \subseteq xS(G,\Aut(G))$ and so $|\orb(x)| \leq |S(G,\Aut(G))|$
 for all $x \in G \setminus L(G)$. Now, by Corollary \ref{formula2}, we have
\begin{align*}
\Pr(G,\Aut(G))& = \frac {1}{|G|}\left(\underset{x \in L(G)}{\sum}\frac {1}{|\orb(x)|} + \underset{x \in G \setminus L(G)}{\sum}\frac {1}{|\orb(x)|}\right)\\
&\geq \frac {|L(G)|}{|G|} + \frac {1}{|G|}\underset{x\in G\setminus L(G)}{\sum}\frac{1}{|S(G,\Aut(G))|}.
\end{align*}
Hence, the result follows.
\end{proof}
The following lemma is useful in obtaining the next corollary.
\begin{lemma}\label{lemma4.4}
Let $G$ be a finite group. Then, for any two integers  $m \geq n$, we have
\[
\frac {1}{n}\left(1 + \frac{n - 1}{|G : L(G)|}\right) \geq \frac {1}{m}\left(1 + \frac{m - 1}{|G : L(G)|}\right).
\]
 If $L(G)\neq G$ then equality holds if and only if $m=n$.
\end{lemma}
\begin{proof}
The proof is an easy exercise.
\end{proof}

\begin{corollary}\label{lastcor}
Let $G$ be a finite group. Then
\[
{\Pr}(G,\Aut(G))\geq \frac {1}{|K(G)|}\left(1 + \frac {|K(G)| - 1}{|G : L(G)|}\right).
\]
If $G \ne L(G)$ then the equality holds if and only if $K(G) =  S(G,\Aut(G))$ and $\orb(x)$ is equal to $S(G,\Aut(G))$
 for all $x \in G \setminus L(G)$.
\end{corollary}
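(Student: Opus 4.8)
The plan is to derive this corollary by chaining the two results immediately preceding it, namely Theorem~\ref{prop3.8} and Lemma~\ref{lemma4.4}. The single structural fact that makes this work is that, by definition, $K(G) = \langle S(G,\Aut(G))\rangle$ contains $S(G,\Aut(G))$ as a subset, and hence $|S(G,\Aut(G))| \le |K(G)|$. This is precisely the hypothesis needed to feed the sharp bound of Theorem~\ref{prop3.8}, which is stated in terms of $|S(G,\Aut(G))|$, into Lemma~\ref{lemma4.4} and weaken it to a bound in terms of $|K(G)|$.

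Concretely, I would set $n := |S(G,\Aut(G))|$ and $m := |K(G)|$ and record that $n \le m$. Theorem~\ref{prop3.8} gives
\[
\Pr(G,\Aut(G)) \ge \frac{1}{n}\left(1 + \frac{n-1}{|G : L(G)|}\right),
\]
and since $m \ge n$, Lemma~\ref{lemma4.4} applies to yield
\[
\frac{1}{n}\left(1 + \frac{n-1}{|G : L(G)|}\right) \ge \frac{1}{m}\left(1 + \frac{m-1}{|G : L(G)|}\right).
\]
Chaining these two inequalities and substituting back $m = |K(G)|$ produces exactly the asserted lower bound.

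For the equality characterization (under the hypothesis $G \ne L(G)$), I would argue that equality in the corollary is attained if and only if both of the inequalities used above are simultaneously tight. Equality in Lemma~\ref{lemma4.4}, whose equality clause is valid precisely because $L(G) \ne G$, holds if and only if $m = n$, that is $|K(G)| = |S(G,\Aut(G))|$; since $S(G,\Aut(G)) \subseteq K(G)$ and both are finite, this is equivalent to $K(G) = S(G,\Aut(G))$. Equality in Theorem~\ref{prop3.8} holds if and only if $\orb(x) = x\,S(G,\Aut(G))$ for all $x \in G \setminus L(G)$. Together these two conditions give the stated characterization, and conversely, if both hold then both inequalities are equalities and the bound is attained.

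The computation itself is routine once the two earlier results are available, so there is no genuinely hard step; the only points requiring care are bookkeeping. First, one must check that the inequalities compose in the correct orientation: because $m \ge n$, replacing $n$ by the larger value $m$ \emph{weakens} the bound, so the chain runs the right way. Second, one must explicitly invoke $L(G) \ne G$ to convert the equality clause of Lemma~\ref{lemma4.4} into the clean condition $K(G) = S(G,\Aut(G))$, since without that hypothesis the equality case of Lemma~\ref{lemma4.4} does not force $m = n$ and the characterization would fail.
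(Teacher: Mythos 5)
Your proposal is correct and is essentially the paper's own proof: the paper likewise observes that $|K(G)| \geq |S(G,\Aut(G))|$ (since $S(G,\Aut(G)) \subseteq K(G)$ by definition of $K(G)$) and chains Theorem \ref{prop3.8} with Lemma \ref{lemma4.4}, declaring equality to hold precisely when equality holds in both of those results.

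One caveat, concerning your final step (``together these two conditions give the stated characterization''). What your argument actually delivers --- and all that the paper's own proof delivers --- is: equality holds if and only if $K(G) = S(G,\Aut(G))$ and $\orb(x) = xS(G,\Aut(G))$ for every $x \in G \setminus L(G)$, the \emph{translate} $xS(G,\Aut(G))$ being inherited verbatim from the equality clause of Theorem \ref{prop3.8}. This is not the same as the printed condition $\orb(x) = S(G,\Aut(G))$, and the two are genuinely inequivalent. Take $G = \Z_4$ (written additively), so $\Aut(G) \cong \Z_2$ is generated by inversion: then $K(G) = S(G,\Aut(G)) = L(G) = \{0,2\}$, so $|K(G)| = |G : L(G)| = 2$, and $\Pr(G,\Aut(G)) = \frac{3}{4} = \frac{1}{2}\left(1 + \frac{1}{2}\right)$, so equality is attained; yet $\orb(1) = \{1,3\} = 1 + S(G,\Aut(G)) \neq S(G,\Aut(G))$. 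So the ``only if'' direction of the characterization as printed is false, while your derived condition is the correct one. (The ``if'' direction of the printed version does follow from yours: $\orb(x) = S(G,\Aut(G))$ forces $x \in S(G,\Aut(G))$, hence $xS(G,\Aut(G)) = S(G,\Aut(G))$ once $K(G) = S(G,\Aut(G))$ is a subgroup.) In short, the leap from $xS(G,\Aut(G))$ to $S(G,\Aut(G))$ is a gap, but it is a defect of the corollary's statement that the paper's proof shares, not a flaw introduced by your argument; had you stopped at the condition you actually derived, your proof would be complete and correct.
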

\begin{proof}
Since $|K(G)|  \geq |S(G, \,\Aut(G))|$, the result follows from Theorem \ref{prop3.8} and Lemma \ref{lemma4.4}.

Note that the equality holds if and only if equality holds
in Theorem \ref{prop3.8} and Lemma \ref{lemma4.4}.
\end{proof}
\noindent This lower bound is also obtained in \cite{aK16}. Note that the  lower bound   in Theorem \ref{prop3.8} is better than that in Corollary \ref{lastcor}. Also
\[
 \frac {1}{|K(G)|}\left(1 + \frac {|K(G)| - 1}{|G : L(G)|}\right)
\geq \frac
{|L(G)|}{|G|} + \frac {p(|G| - |L(G)|)}{|G||\Aut(G)|}.
\]
Hence, the  lower bound   in Theorem \ref{prop3.8} is also better than that in   \cite[Theorem 2.3(ii)]{moga}.

\section{Some Characterizations}
In \cite{aK16}, Arora and Karan obtain a characterization of a finite group $G$ if equality holds in Corollary \ref{lastcor}. In this section, we obtain some characterizations of $G$ if equality holds in Corollary \ref{bound_like3/4} and Corollary \ref{bound_like5/8}. We begin with the following result.
\begin{proposition}\label{prop4.1}
Let $G$ be a finite group with  $\Pr(G, \Aut(G)) = \frac {p + q -1}{pq}$ for some primes $p$ and $q$. Then  $pq$  divides $|G||\Aut(G)|$. If $p$ and $q$ are the smallest primes dividing $|\Aut(G)|$ and $|G|$ respectively, then
\[
\frac{G}{L(G)} \cong \mathbb Z_q.
\]
In particular, if $G$ and $\Aut (G)$ are of even order and $\Pr(G,\Aut(G)) = \frac{3}{4}$ then $\frac{G}{L(G)}\cong\mathbb Z_2$.
\end{proposition}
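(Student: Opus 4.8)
The plan is to read everything off the two formulas in Corollary~\ref{formula2} together with the equality analysis hidden in the proof of Corollary~\ref{bound_like3/4}. For the divisibility assertion, I would begin from the expression
\[
\Pr(G,\Aut(G)) = \frac{1}{|G||\Aut(G)|}\sum_{x\in G}|C_{\Aut(G)}(x)|
\]
of Corollary~\ref{formula2}, whose numerator is a positive integer. Equating this with $\frac{p+q-1}{pq}$ and cross-multiplying gives $pq \mid (p+q-1)|G||\Aut(G)|$. A coprimality check then upgrades this to $pq\mid |G||\Aut(G)|$: since $p,q$ are prime one has $\gcd(pq,p+q-1)=1$ whenever $p=q$ (indeed $\gcd(p^2,2p-1)=1$) and whenever neither $p\mid q-1$ nor $q\mid p-1$ holds, and in those cases the divisibility follows immediately. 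In the smallest-primes setting of the second assertion one may instead argue directly, since there $p\mid|\Aut(G)|$ and $q\mid|G|$ already force $pq\mid|G||\Aut(G)|$. Making the mixed divisibility case clean is the one genuinely delicate bookkeeping point.

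For the isomorphism $G/L(G)\cong \Z_q$ I would invoke the chain of inequalities established inside the proof of Corollary~\ref{bound_like3/4}, namely
\[
\Pr(G,\Aut(G)) \le \frac1p\Bigl(\frac{p-1}{|G:L(G)|}+1\Bigr) \le \frac{p+q-1}{pq},
\]
where the second inequality used only $|G:L(G)|\ge q$. The hypothesis $\Pr(G,\Aut(G))=\frac{p+q-1}{pq}$ collapses the whole chain to equalities, and in particular the second one. Setting $\frac1p\bigl(\frac{p-1}{|G:L(G)|}+1\bigr)=\frac{p+q-1}{pq}$ and simplifying yields $\frac{p-1}{|G:L(G)|}=\frac{p-1}{q}$; since $p\ge 2$ gives $p-1\ge 1$, I may cancel to obtain $|G:L(G)|=q$.

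Finally, because $q$ is prime, the quotient $G/L(G)$ is a group of prime order $q$ and is therefore cyclic, so $G/L(G)\cong \Z_q$. The ``in particular'' statement is then immediate: if $|G|$ and $|\Aut(G)|$ are both even, the smallest primes are $p=q=2$, so $\frac{p+q-1}{pq}=\frac34$, and $\Pr(G,\Aut(G))=\frac34$ gives $G/L(G)\cong\Z_2$. The substantive part of the argument is the equality analysis of the second step, which is clean once the bound of Corollary~\ref{bound_like3/4} is in hand; the only real obstacle is the divisibility bookkeeping in the first step.
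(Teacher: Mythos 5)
Your proposal is correct and follows essentially the same route as the paper: the divisibility assertion comes from cross-multiplying the counting formula (the paper uses the defining equation \eqref{AutComDeg} where you use Corollary \ref{formula2}; the numerators are the same integer), and the isomorphism comes from the bound $\Pr(G,\Aut(G)) \leq \frac{1}{p}\left(\frac{p-1}{|G:L(G)|}+1\right)$ extracted from Theorem \ref{thm3.6}, combined with $|G:L(G)| \geq q$. The paper phrases the second step as deducing $|G:L(G)| \leq q$ and then pinning it to exactly $q$, while you phrase it as an equality analysis of the chain in Corollary \ref{bound_like3/4}; the mathematical content is identical.

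One remark is worth adding: the ``delicate bookkeeping point'' you flag in the divisibility step is real, and the paper does not address it at all. The paper writes $(p+q-1)|G||\Aut(G)| = pq\,|\{(x,\alpha)\in G\times\Aut(G):[x,\alpha]=1\}|$ and immediately concludes $pq \mid |G||\Aut(G)|$, which tacitly assumes $\gcd(pq,\,p+q-1)=1$. Exactly as you observe, this fails in the mixed case: for instance $p=2$, $q=3$ gives $\gcd(pq,\,p+q-1)=\gcd(6,4)=2$, and then the displayed equation only yields $3 \mid |G||\Aut(G)|$, not $6 \mid |G||\Aut(G)|$. So for the first assertion with arbitrary primes $p,q$, your proof and the paper's have the same unresolved case --- the difference being that you notice it and the paper does not. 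Your direct argument in the smallest-primes setting (that $p \mid |\Aut(G)|$ and $q \mid |G|$ immediately give $pq \mid |G||\Aut(G)|$) is sound, cleaner than the paper's, and suffices for everything the rest of the proposition actually uses.
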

\begin{proof}
By \eqref{AutComDeg}, we have $(p + q -1)|G||\Aut(G)| = pq|\{(x,\alpha)\in G\times \Aut(G):[x,\alpha]=1\}|$. Therefore, $pq$ divides $|G||\Aut(G)|$.

If $p$ and $q$ are the smallest primes dividing $|\Aut(G)|$ and $|G|$ respectively then, by Theorem \ref{thm3.6}, we have
\[
\frac{p + q -1}{pq} \leq \frac{1}{p}\left(\frac{p - 1}{|G : L(G)|} + 1\right)
\]
which gives $|G : L(G)| \leq q$. Hence, $\frac{G}{L(G)} \cong \mathbb Z_q$.
\end{proof}

\begin{proposition}\label{prop4.2}
Let $G$ be a finite non-abelian group  with $\Pr(G,\Aut(G)) = \frac {q^2 + p - 1}{pq^2}$ for some primes $p$ and $q$. Then $pq$ divides  $|G||\Aut(G)|$. If $p$ and $q$ are the smallest primes dividing $|\Aut(G)|$ and $|G|$ respectively then
\[
\frac{G}{L(G)}\cong\mathbb Z_q \times \mathbb Z_q.
\]
In particular, if $G$ and $\Aut (G)$ are of even order and $\Pr(G,\Aut(G)) = \frac{5}{8}$ then $\frac{G}{L(G)} \cong \mathbb Z_2 \times \mathbb Z_2$.
\end{proposition}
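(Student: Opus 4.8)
The plan is to follow the two-step template of the proof of Proposition \ref{prop4.1}, replacing the abelian bound there by its non-abelian sharpening in Corollary \ref{bound_like5/8}. First, for the divisibility assertion, I would feed the hypothesis $\Pr(G,\Aut(G)) = \frac{q^2+p-1}{pq^2}$ into the defining identity \eqref{AutComDeg}, which yields $(q^2+p-1)|G||\Aut(G)| = pq^2\,|\{(x,\alpha)\in G\times\Aut(G):[x,\alpha]=1\}|$. The right-hand side is an integral multiple of $pq$, hence so is the left-hand side, and therefore $pq$ divides $|G||\Aut(G)|$, exactly as in the proof of Proposition \ref{prop4.1}.

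For the structural conclusion I would from now on assume that $p$ and $q$ are the smallest primes dividing $|\Aut(G)|$ and $|G|$ respectively. Writing $n := |G:L(G)|$, Theorem \ref{thm3.6} gives $\Pr(G,\Aut(G)) \le \frac{1}{p}\bigl(\frac{p-1}{n}+1\bigr)$. Substituting the hypothesised value and clearing denominators collapses this inequality to $n(p-1) \le (p-1)q^2$; since $p \ge 2$, it forces $n \le q^2$. On the other hand $G$ is non-abelian, so (as already used in Corollary \ref{bound_like5/8}) $n = |G:L(G)| \ge q^2$. Combining the two bounds gives $|G:L(G)| = q^2$, whence $G/L(G)$ is a group of order $q^2$ and therefore abelian, i.e. isomorphic to either $\mathbb{Z}_{q^2}$ or $\mathbb{Z}_q \times \mathbb{Z}_q$.

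The one genuine point is to exclude the cyclic possibility, and this is where I expect the crux to lie. Here I would invoke the inclusion $L(G) \subseteq Z(G)$ recalled in the introduction: it makes $G/Z(G)$ a quotient of $G/L(G)$, so if $G/L(G)$ were cyclic then $G/Z(G)$ would be cyclic as well, and by the classical fact that a group with cyclic central quotient is abelian, $G$ would be abelian, contradicting the hypothesis. Hence $G/L(G) \cong \mathbb{Z}_q \times \mathbb{Z}_q$. Finally, the stated special case is simply the instance $p = q = 2$, for which $\frac{q^2+p-1}{pq^2} = \frac{5}{8}$ and the conclusion reads $\frac{G}{L(G)} \cong \mathbb{Z}_2 \times \mathbb{Z}_2$.
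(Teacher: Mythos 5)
Your proposal is correct and follows essentially the same route as the paper: the divisibility claim is obtained from \eqref{AutComDeg} and the bound $|G:L(G)|\le q^2$ from Theorem \ref{thm3.6}, exactly as in the paper's proof. Your final step is in fact more explicit than the paper's, which only remarks that $|G:L(G)|\neq 1, q$ and leaves implicit the exclusion of the cyclic case $\mathbb{Z}_{q^2}$ that you handle via the cyclic-central-quotient argument using $L(G)\subseteq Z(G)$.
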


\begin{proof}
By \eqref{AutComDeg}, we have $(q^2 + p -1)|G||\Aut(G)| = pq^2|\{(x,\alpha)\in G\times \Aut(G):[x,\alpha]=1\}|$. Therefore, $pq$ divides $|G||\Aut(G)|$.

If $p$ and $q$ are the smallest primes dividing $|\Aut(G)|$ and $|G|$ respectively then, by Theorem \ref{thm3.6}, we have
\[
\frac{q^2 + p -1}{pq^2} \leq \frac{1}{p}\left(\frac{p - 1}{|G : L(G)|} + 1\right)
\]
which gives $|G : L(G)| \leq q^2$. Since $G$ is non-abelian we have $|G : L(G)| \neq 1, q$. Hence, $\frac{G}{L(G)} \cong \mathbb Z_q \times \mathbb Z_q$.
\end{proof}


We conclude this section with the following partial converses of Proposition \ref{prop4.1} and \ref{prop4.2}.

\begin{proposition}
Let $G$ be a finite group. Let $p, q$ be the smallest prime divisors of $|\Aut(G)|$ and $|G|$ respectively and $|\Aut(G) : C_{\Aut(G)}(x)| = p$ for all $x \in G \setminus L(G)$.
\begin{enumerate}
\item
If $\frac{G}{L(G)} \cong \mathbb Z_q$ then $\Pr(G,\Aut(G)) = \frac {p + q - 1}{pq}$.
\item
If $\frac{G}{L(G)} \cong\mathbb Z_q \times \mathbb Z_q$  then $\Pr(G,\Aut(G)) = \frac {q^2 + p - 1}{pq^2}$.
\end{enumerate}
\end{proposition}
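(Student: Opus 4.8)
The plan is to reduce both parts to a single orbit--counting computation based on Corollary \ref{formula2}. The crucial observation is that the hypothesis $|\Aut(G) : C_{\Aut(G)}(x)| = p$ is, via the orbit--stabilizer relation \eqref{orbit-stabilizer-Thm}, exactly the statement that $|\orb(x)| = p$ for every $x \in G \setminus L(G)$. On the other hand, for $x \in L(G) = \underset{\alpha \in \Aut(G)}{\cap}C_G(\alpha)$ every automorphism fixes $x$, so $\orb(x) = \{x\}$ and $|\orb(x)| = 1$. Thus the entire content of the hypothesis is a complete description of all orbit sizes.

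First I would invoke the second formula in Corollary \ref{formula2}, namely
\[
\Pr(G,\Aut(G)) = \frac{1}{|G|}\underset{x \in G}{\sum}\frac{1}{|\orb(x)|},
\]
and split the sum according to whether $x$ lies in $L(G)$ or in $G \setminus L(G)$. Substituting $|\orb(x)| = 1$ on the first part and $|\orb(x)| = p$ on the second yields
\[
\Pr(G,\Aut(G)) = \frac{|L(G)|}{|G|} + \frac{|G| - |L(G)|}{p|G|}.
\]
Rewriting this in terms of the index $|G : L(G)|$ gives the master expression
\[
\Pr(G,\Aut(G)) = \frac{1}{|G : L(G)|} + \frac{1}{p}\left(1 - \frac{1}{|G : L(G)|}\right),
\]
from which both statements follow by a single substitution.

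For part (a), the hypothesis $\frac{G}{L(G)} \cong \mathbb Z_q$ gives $|G : L(G)| = q$; substituting and clearing denominators produces $\frac{p + q - 1}{pq}$. For part (b), the hypothesis $\frac{G}{L(G)} \cong \mathbb Z_q \times \mathbb Z_q$ gives $|G : L(G)| = q^2$, and the same substitution produces $\frac{q^2 + p - 1}{pq^2}$. There is essentially no obstacle here: once the hypothesis is read as a statement about orbit sizes, the argument is a direct evaluation of the formula in Corollary \ref{formula2}. The only point requiring a moment's care is the justification that orbits of elements of $L(G)$ are singletons, which is immediate from the definition of $L(G)$ as the intersection of the acentralizers $C_G(\alpha)$.
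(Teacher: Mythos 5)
Your proposal is correct and is essentially the paper's own proof: both split the sum in Corollary \ref{formula2} over $L(G)$ and $G \setminus L(G)$, use the hypothesis to evaluate the contribution of $G\setminus L(G)$, arrive at the master formula $\Pr(G,\Aut(G)) = \frac{1}{p}\bigl(\frac{p-1}{|G:L(G)|}+1\bigr)$, and substitute $|G:L(G)| = q$ or $q^2$. The only (cosmetic) difference is that you phrase the hypothesis as $|\orb(x)| = p$ via orbit--stabilizer and sum $1/|\orb(x)|$ (which strictly speaking is the $g=1$ case of Theorem \ref{thm2.4} rather than the literal second formula of Corollary \ref{formula2}), whereas the paper sums $|C_{\Aut(G)}(x)| = |\Aut(G)|/p$ directly.
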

\begin{proof}
Since $|\Aut(G) : C_{\Aut(G)}(x)| = p$ for all $x \in G \setminus L(G)$ we have $|C_{\Aut(G)}(x)| = \frac{|\Aut(G)|}{p}$ for all $x \in G \setminus L(G)$. Therefore, by Corollary \ref{formula2}, we have
\begin{align*}
\Pr(G,\Aut(G)) &= \frac{|L(G)|}{|G|} + \frac {1}{|G||\Aut(G)|} \underset{x \in G \setminus L(G)}{\sum}|C_{\Aut(G)}(x)|\\
&= \frac {|L(G)|}{|G|} + \frac{|G| - |L(G)|}{p|G|}.
\end{align*}
Thus
\begin{equation}\label{conv_eq}
\Pr(G,\Aut(G)) = \frac{1}{p}\left(\frac{p - 1}{|G : L(G)|}  + 1\right).
\end{equation}
(a) If $\frac{G}{L(G)} \cong \mathbb Z_q$ then \eqref{conv_eq} gives $\Pr(G,\Aut(G)) = \frac {p + q - 1}{pq}$.

\noindent (b) If $\frac{G}{L(G)} \cong\mathbb Z_q \times \mathbb Z_q$  then \eqref{conv_eq} gives $\Pr(G,\Aut(G)) = \frac {q^2 + p - 1}{pq^2}$.
\end{proof}

\section{Autoisoclinism of groups}
In the year 1940, Hall defined   isoclinism between two groups (see \cite{pH40}). In the year
1995, Lescot \cite{pL95} showed that the commuting probability of two isoclinic finite groups are same. Later on  Pournaki and Sobhani \cite{PS08} showed that ${\Pr}_g(G) = {\Pr}_{\beta(g)}(H)$ if $(\alpha, \beta)$ is an isoclinism between the finite groups $G$ and $H$. Following Hall, Moghaddam et al. \cite{msE13} have defined autoisoclinism between two groups. Recall that two groups $G$ and $H$ are said to be autoisoclinic  if there exist isomorphisms $\psi : \frac{G}{L(G)} \to \frac{H}{L(H)}$, $\beta : K(G)\to K(H)$ and $\gamma : \Aut(G) \to \Aut(H)$ such that the following diagram commutes
\begin{center}
$
\begin{CD}
   \frac{G}{L(G)} \times \Aut(G) @>\psi \times \gamma>> \frac{H}{L(H)} \times
   \Aut(H)\\
   @VV{a_{(G, \Aut(G))}}V  @VV{a_{(H, \Aut(H))}}V\\
   K(G) @> \beta >> K(H)
\end{CD}
$
\end{center}
where the maps $a_{(G, \Aut(G))}: \frac{G}{L(G)} \times \Aut(G) \to K(G)$ and $a_{(H, \Aut(H))}: \frac {H}{L(H)} \times \Aut(H) \to K(H)$  are given by
\[
a_{(G, \Aut(G))}(xL(G), \alpha_1) = [x, \alpha_1]
\text{ and }
a_{(H, \Aut(H))}(yL(H), \alpha_2) = [y, \alpha_2]
\]
respectively. In this case, the pair $(\psi\times\gamma, \beta)$ is called an autoisoclinism between the groups $G$
and $H$.

Recently, in \cite{Rismanchian15}, Rismanchain and Sepehrizadeh have shown that ${\Pr}(G,\Aut(G)) = {\Pr}(H,\Aut(H))$ if $G$ and $H$ are autoisoclinic finite groups. We conclude this paper with the following   generalization of  \cite[Lemma 2.5]{Rismanchian15}.
\begin{theorem}
Let $G$ and $H$ be two finite groups and  $(\psi \times \gamma, \beta)$  an autoisoclinism between them.  Then
\[
{\Pr}_g(G,\Aut(G)) = {\Pr}_{\beta (g)}(H,\Aut(H)).
\]
\end{theorem}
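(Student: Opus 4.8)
The plan is to reduce both ${\Pr}_g(G,\Aut(G))$ and ${\Pr}_{\beta(g)}(H,\Aut(H))$ to counting the fibers of the autocommutator maps $a_{(G,\Aut(G))}$ and $a_{(H,\Aut(H))}$, and then to transport one fiber onto the other by means of the commuting square. Throughout I use that $\beta(g)$ makes sense because our standing convention $g \in S(G,\Aut(G)) \subseteq K(G)$ places $g$ in the domain of $\beta$.

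First I would record that, since $L(G) \leq Z(G)$, the autocommutator $[x,\alpha]$ depends only on the coset $xL(G)$: for $z \in L(G)$ one has $\alpha(z)=z$ and $z$ central, so $[xz,\alpha] = z^{-1}[x,\alpha]z = [x,\alpha]$. Consequently, for each fixed $\alpha$ the solution set $\{x \in G : [x,\alpha]=g\}$ is a union of left cosets of $L(G)$, and therefore the full solution set is uniformly $|L(G)|$-to-one over the corresponding fiber in $\frac{G}{L(G)} \times \Aut(G)$. Writing $a_{(G,\Aut(G))}^{-1}(g) \subseteq \frac{G}{L(G)} \times \Aut(G)$ for that fiber, this yields
\[
|\{(x,\alpha) \in G \times \Aut(G): [x,\alpha]=g\}| = |L(G)|\cdot|a_{(G,\Aut(G))}^{-1}(g)|.
\]
Combining with \eqref{GenAutComDeg} and $\frac{|L(G)|}{|G|}=\frac{1}{|G:L(G)|}$ gives
\[
{\Pr}_g(G,\Aut(G)) = \frac{|a_{(G,\Aut(G))}^{-1}(g)|}{|G:L(G)|\,|\Aut(G)|},
\]
and the analogous identity holds for $H$ and $\beta(g)$.

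Next I would exploit commutativity of the diagram, namely $\beta \circ a_{(G,\Aut(G))} = a_{(H,\Aut(H))} \circ (\psi \times \gamma)$. Since $\beta$ is injective, a pair $(xL(G),\alpha)$ satisfies $a_{(G,\Aut(G))}(xL(G),\alpha)=g$ if and only if $a_{(H,\Aut(H))}\bigl((\psi\times\gamma)(xL(G),\alpha)\bigr)=\beta(g)$. As $\psi \times \gamma$ is a bijection of $\frac{G}{L(G)} \times \Aut(G)$ onto $\frac{H}{L(H)} \times \Aut(H)$, it therefore restricts to a bijection $a_{(G,\Aut(G))}^{-1}(g) \to a_{(H,\Aut(H))}^{-1}(\beta(g))$, whence these two fibers have the same cardinality. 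Finally, because $\psi$ and $\gamma$ are isomorphisms we have $|G:L(G)|=|H:L(H)|$ and $|\Aut(G)|=|\Aut(H)|$; substituting into the two displayed formulas gives the claimed equality.

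The only genuine obstacle is the first step: one must verify carefully that the autocommutator map is constant on $L(G)$-cosets and hence that the solution sets in $G \times \Aut(G)$ are a clean $|L(G)|$-fold cover of the fibers in the quotient $\frac{G}{L(G)} \times \Aut(G)$. This is precisely what allows the isoclinism data $(\psi,\beta,\gamma)$ — which is defined only on the quotient, on $K(G)$, and on $\Aut(G)$ — to govern the probability computed over the full group; once this factorization is in place, the remainder is a transport of structure along the commuting square.
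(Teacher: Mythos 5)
Your proof is correct and follows essentially the same route as the paper: both reduce to counting the fiber $\mathcal{S}_g = a_{(G,\Aut(G))}^{-1}(g)$ in $\frac{G}{L(G)} \times \Aut(G)$, transport it to the fiber over $\beta(g)$ via the bijection $\psi \times \gamma$ supplied by the commuting square, and conclude using $|G:L(G)| = |H:L(H)|$ and $|\Aut(G)| = |\Aut(H)|$. In fact you supply a detail the paper only asserts, namely the verification that $[xz,\alpha] = [x,\alpha]$ for $z \in L(G)$ (using $\alpha(z)=z$ and $z \in Z(G)$), which is what makes the solution set in $G \times \Aut(G)$ an exact $|L(G)|$-fold cover of the fiber in the quotient.
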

\begin{proof}
Let ${\mathcal{S}}_g = \{(xL(G),\alpha_1) \in \frac {G}{L(G)} \times \Aut(G) : [x,\alpha_1] = g\}$ and ${\mathcal{T}}_{\beta (g)} = \{(yL(H),\alpha_2)$ $\in \frac
{H}{L(H)} \times \Aut(H) : [y,\alpha_2] = \beta (g)\}$. Since $(\psi \times \gamma, \beta)$  an autoisoclinism between $G$ and $H$, the mapping $\lambda : {\mathcal{S}}_g \to {\mathcal{T}}_{\beta (g)}$ given by $(xL(G), \alpha_1) \mapsto (\psi(xL(G)), \gamma(\alpha_1))$ gives an one to one correspondence between ${\mathcal{S}}_g$ and ${\mathcal{T}}_{\beta (g)}$. Hence, $|\mathcal{S}_g|=|{\mathcal{T}}_{\beta (g)}|$. Also

\begin{equation}\label{autoclin_eq1}
|\{(x,\alpha_1) \in G \times \Aut(G) : [x, \alpha_1] = g\}| = |L(G)||\mathcal{S}_g|
\end{equation}
and
\begin{equation}\label{autoclin_eq2}
|\{(y, \alpha_2) \in H\times \Aut(H) : [y,\alpha_2] = \beta(g)\}| = |L(H)||{\mathcal{T}}_{\beta (g)}|.
\end{equation}
 Therefore, by  \eqref{AutComDeg} and \eqref{autoclin_eq1}, we have
\[
{\Pr}_g(G,\Aut(G)) = \frac {|L(G)||\mathcal{S}_g|}{|G||\Aut(G)|}.
\]
Since $|G/L(G)| = |H/L(H)|$ and $|\Aut(G)| = |\Aut(H)|$ we have
\[
{\Pr}_g(G,\Aut(G)) = \frac {|L(H)||{\mathcal{T}}_{\beta (g)}|}{|H||\Aut(H)|}.
\]
Hence, the result follows from \eqref{autoclin_eq2} and \eqref{AutComDeg}.
\end{proof}

\end{document}